\def\includegraphics{}
\providecommand{\tabularnewline}{\\}
\theoremstyle{plain}
\newtheorem{thm}{\protect\theoremname}
  \theoremstyle{plain}
  \newtheorem{lem}[thm]{\protect\lemmaname}
  \theoremstyle{plain}
  \newtheorem{prop}[thm]{\protect\propositionname}
\providecommand{\lemmaname}{Lemma}
  \providecommand{\propositionname}{Proposition}
\providecommand{\theoremname}{Theorem}
\begin{document}

\begin{frontmatter}

\begin{fmbox}
\dochead{Research}


\title{Integration over curves and surfaces defined by the closest point
mapping}


\author[
   addressref={ud},                   
   corref={ud},                       
   email={ckublik1@udayton.edu}   
]{\inits{CK}\fnm{Catherine } \snm{Kublik}}
\author[
   addressref={ut,kth},
   email={ytsai@ices.utexas.edu}
]{\inits{RT}\fnm{Richard} \snm{Tsai}}


\address[id=ud]{
  \orgname{Department of Mathematics, University of Dayton}, 
  \street{300 College Park},                     %
  \city{Dayton, OH},                              
  \cny{USA}                                    
}
\address[id=ut]{%
  \orgname{Department of Mathematics and Institute for Computational Engineering and Sciences, University of Texas at Austin},
  \street{2515 Speedway},
  \postcode{78712}
  \city{Austin},
  \cny{USA}
  }
  
  \address[id=kth]{%
  \orgname{Department of Mathematics, KTH Royal Institute of Technology},
  \street{SE-100 44},
  \city{Stockholm},
  \cny{Sweden}
}



\end{fmbox}


\begin{abstractbox}

\begin{abstract} 
We propose a new formulation for integrating
over smooth curves and surfaces  that are described
by their closest point mappings. 
Our method is designed for curves and surfaces that are not defined by any explicit parameterization
and  is intended to be used in combination with level set techniques. 
However, contrary to the common practice
with level set methods, the \emph{volume integrals} derived from our
formulation coincide exactly with the surface or line integrals that
one wishes to compute. We study various aspects of this formulation
and provide a geometric interpretation of this formulation in terms
of the singular values of the Jacobian matrix of the closest point
mapping. Additionally, we extend the formulation - initially derived
to integrate over manifolds of codimension one - to include integration
along curves in three dimensions. Some numerical examples using very simple discretizations
are presented to demonstrate the efficacy of the formulation.

\end{abstract}


\begin{keyword}
\kwd{boundary integrals}
\kwd{closest point mapping}
\kwd{level set methods}
\end{keyword}


\end{abstractbox}
%

\end{frontmatter}




\section{Introduction}

This paper provides simple formulations for integrating over manifolds of codimensions one, or two in $\mathbb{R}^3$, when the manifolds are described by functions that map points in $\mathbb{R}^n$ ($n=2,3$) to their closest points on curves or surfaces using the Euclidean distance.
The idea for the present work originated in 
\cite{kublik_tanushev_tsai13} where the authors proposed a formulation for computing
integrals of the form 
\begin{equation}
\int_{\partial\Omega}v(\mathbf{x}(s))ds,\label{intoverboundary}
\end{equation}
in the level set framework, namely when the domain $\Omega$ is represented
implicitly by the signed distance function to its boundary $\partial\Omega$. 
Typically in a level set method \cite{osher_fedkiw02,osher_sethian88, sethian99}, 
to evaluate an integral of the form
of \eqref{intoverboundary} where $\partial\Omega$ is the zero level
set of a continuous function $\varphi$, it is necessary to extend
the function $v$ defined on the boundary $\partial\Omega$ to a neighborhood in $\mathbb{R}^{n}$.
The extension of $v$, denoted $\tilde{v}$, is typically a constant
extension of $v$. The integral is then \emph{approximated} by an
integral involving a regularized Dirac-$\delta$ function concentrated
on $\partial\Omega$, namely 
\[
\int_{\partial\Omega}v(\mathbf{x}(s))ds\approx\int_{\mathbb{R}^{n}}\tilde{v}(\mathbf{x})\delta_{\epsilon}(\varphi(\mathbf{x}))|\nabla\varphi(\mathbf{x})|d\mathbf{x}.
\]
Various numerical approximations of this delta function have been
proposed, see e.g. \cite{engquist_tornberg_tsai05,dolbow_harari09,smereka06,towers07,zahedi_tornberg10}.

In \cite{kublik_tanushev_tsai13}, with the choice of $\varphi=d_{\partial\Omega}$ 
being a signed distance function to $\partial\Omega$,  the integral \eqref{intoverboundary} is expressed as an average of integrals over nearby level sets of
$d_{\partial\Omega}$, where these nearby level sets continuously sweep a thin
tubular neighborhood around the boundary $\partial\Omega$ of radius
$\epsilon$. Consequently, \eqref{intoverboundary} is \emph{equivalent} to the volume integral shown on the right hand side below:
\begin{equation}
\int_{\partial\Omega}v(\mathbf{x}(s))ds=\int_{\mathbb{R}^{n}}v(\mathbf{x}^{*})J(\mathbf{x};d_{\partial\Omega})\delta_{\epsilon}(d_{\partial\Omega}(\mathbf{x}))d\mathbf{x},\label{intformulation}
\end{equation}
where $\delta_{\epsilon}$ is an averaging kernel, $\mathbf{x}^{*}$
is the
closest point  on $\partial\Omega$ to $\mathbf{x}$  and 
$J(\mathbf{x};d_{\partial\Omega})$  accounts 
for the change in curvature between the nearby level sets and the zero level set.

Now suppose that $\partial\Omega$ is a smooth hypersurface in $\mathbb{R}^3$ and assume that $\mathbf{x}$ is sufficiently close to $\Omega$ so that the closest point mapping
\[
\mathbf{x}^*=P_{\partial\Omega}(\mathbf{x})=\mathrm{argmin}_{y\in\partial\Omega} |\mathbf{x}-\mathbf{y}|
\]
 is continuously differentiable. 
 Then the restriction of $P_{\partial\Omega}$ to  $\partial\Omega_\eta$ is a diffeormorphism between $\partial\Omega_\eta$ and $\partial\Omega$, where $\partial \Omega_{\eta} \mathrel{\mathop:}=  \left \{ \mathbf{x} : d_{\partial \Omega} (\mathbf{x}) = \eta \right \}$.
As a result, it is possible to write integrals over $\partial\Omega$ using points on $\partial\Omega_\eta$ as:
\[
\int_{\partial\Omega} v(\mathbf{x}) dS = \int_{\partial\Omega_\eta} v(\mathbf{x}^*)J(\mathbf{x};\eta)dS,
\]
where $J(\mathbf{x},\eta)$ comes from the change of variable defined by $P_{\partial\Omega}$ restricted on $\partial\Omega_\eta$.
Averaging the above integrals respectively with a kernel, $\delta_\epsilon$, compactly supported in $[-\epsilon,\epsilon]$, we obtain
\[
\int_{\partial\Omega} v(\mathbf{x}) dS = \int_{-\epsilon}^{\epsilon} \delta_\epsilon(\eta) \int_{\partial\Omega_\eta} v(\mathbf{x}^*)J(\mathbf{x};\eta)dS~d\eta.
\]
Formula \eqref{intformulation} then follows from the coarea formula \cite{federer69} applied to the integral on the right hand side.

In the following section, we show that in three dimensions
the Jacobian $J$ in~\eqref{intformulation} is the product of the first two singular values, $\sigma_1$ and $\sigma_2$, of the Jacobian matrix of the closest point mapping $\frac{\partial P_{\partial\Omega}}{\partial \mathbf{x}}$; namely,
\begin{equation}
\int_{\partial\Omega}v(\mathbf{x}(s))ds=\int_{\mathbb{R}^{3}}v(P_{\partial\Omega }(\mathbf{x}))\delta_{\epsilon}(d_{\partial \Omega}(\mathbf{x}))\prod_{j=1}^{2} \sigma_j(\mathbf{x}) d\mathbf{x}. \label{our_formulation}
\end{equation}
To motivate the new approach using singular values, we consider  
Cartesian coordinate systems with the origin placed on points sufficiently close to the surface, 
and the $z$ direction normal to the surface. Thus the partial derivatives of the closest point mapping in the $z$ direction will yield zero and the partial derivatives in the other two directions naturally 
correspond to differentiation in the tangential directions. 
Thus we see that one of the singular values should be $0$ while the other two are related to the surface area element. 
We also derive a similar formula for integration along curves in three dimensions (codimension 2).
The advantages of this new formula include the ease for constructing higher order approximations 
of $J$ via e.g. simple differencing, even in neighborhoods of surface boundaries where curvatures become unbounded.

This paper is motivated by the recent success in the closest point methods and the Dynamic Surface Extension method \cite{steinhoff2000new}, for evolving interfaces and solving partial differential equations on 
surfaces \cite{macdonald_brandman_ruuth11, macdonald_ruuth08, macdonald_ruuth09, ruuth_merriman08}, by the need to process data sets that contain 
unstructured points sampled from some underlying surfaces, and targets applications where manifolds are not defined by patches of explicit parameterizations and may evolve drastically due to some coupled processes; 
see e.g. free boundary problems~\cite{Hou-Acta-Numerica-free-boundary}.
Our work provides a convenient way to formulate boundary integral methods in such applications without conversion to local parameterizations.
If the manifolds are defined by explicit parameterizations, it is natural and typically more accurate to use conventional methods 
such as Nystr\"{o}m methods using quadratures on the parameter space or Boundary Element Methods with weak formulations, see
e.g. \cite{Atkinson-book}. 
Additionally, for applications involving fluid-structure interactions, we mention the immersed boundary method which involves accurate discretizations of Dirac delta measures  \cite{mittal_iaccarino05,peskin02}.

Closest point mappings are easily computed in the context of level set methods \cite{osher_sethian88} since there exist fast algorithms for constructing distance functions from level set functions \cite{cheng_tsai08, russo_smereka00, sethian96, tsai_cheng_osher_zhao03,tsitsiklis95}. More precisely, 
\[
P_{\partial\Omega}(\mathbf{x})=\mathbf{x}-d_{\partial\Omega}(\mathbf{x})\nabla d_{\partial\Omega}(\mathbf{x}).
\]
Our previous work \cite{kublik_tanushev_tsai13} as well as this current paper provide a simple framework for constructing numerical schemes for boundary integral methods when the interface is described implicitly by a level set function, and is intended for use in such context.

Finally, closest point mappings can also be computed easily from dense and unorganized point sets that are acquired directly from an imaging device (e.g. LIDAR). This paper lays the foundation of a numerical scheme for computing integrals over surfaces sampled by unstructured point clouds. 

\section{Integration using the closest point mapping\label{sec:Closest-point-mapping}}

In this section, we relate the Jacobian $J$ in \eqref{intformulation}
to the singular values of the Jacobian matrix
of the closest point mapping from $\mathbb{R}^2$ or $\mathbb{R}^3$ to $\Gamma$, where $\Gamma$ denotes the curves or surfaces on which integrals are defined.
We assume that in three dimensions, if $\Gamma$
is not closed, it has smooth boundaries. For clarity of the exposition in the rest of the paper, we will now denote the distance function simply by $d$.

\subsection{Codimension 1}

We consider a $C^{2}$ compact curve or surface $\Gamma$ that can
either be closed or not. If $\Gamma$ is closed, then it is the boundary
of a domain $\Omega$ so that $\Gamma$ can be denoted $\partial\Omega$.
If $\Gamma$ is not closed, we assume that it has smooth boundaries.
We define $d:\mathbb{R}^{n}\mapsto\mathbb{R}\cup\{0\}$ to be the
distance function to $\Gamma$ and $P_{\Gamma}$ to be the closest
point mapping $P_{\Gamma}:\mathbb{R}^{n}\mapsto\Gamma$ (for $n=2,3$)
defined as

\begin{equation}
|P_{\Gamma}(\mathbf{x})-\mathbf{x}|=\min_{\mathbf{y}\in\Gamma}|\mathbf{y}-\mathbf{x}|.\label{eq:cpm}
\end{equation}

We let $d_{0}$ be the distance function to $\Gamma$ if it is open
and $d_{s}$ be the signed distance function to $\Gamma=\partial\Omega$
if it is closed. The signed distance function is defined as

\[
d_{s}(\mathbf{x})\mathrel{\mathop:}=\begin{cases}
\inf_{\mathbf{y}\in\Omega^{c}}|\mathbf{x}-\mathbf{y}| & \mbox{if }\mathbf{x}\in\Omega,\\
-\inf_{\mathbf{y}\in\Omega}|\mathbf{x}-\mathbf{y}| & \mbox{if }\mathbf{x}\in\bar{\Omega}^{c}.
\end{cases}
\]
Then we define $d$ as follows: 
\begin{equation}
d(\mathbf{x})\mathrel{\mathop:}=\begin{cases}
d_{0}(\mathbf{x}) & \mbox{if }\Gamma\mbox{ is open,}\\
d_{s}(\mathbf{x}) & \mbox{if }\Gamma\mbox{ is closed.}
\end{cases}\label{eq:distancefn}
\end{equation}

The following lemma provides a concise expression of the Gaussian curvature in terms of the distance function. This is probably a known result but we include its proof to preserve the completeness of the paper.

\begin{lem}
\label{gaussiancurv_lemma} Let  $d$ be the distance function
to  $\Gamma$ defined in~\eqref{eq:distancefn}. For $|\eta|$  sufficiently close to $0$, the Gaussian curvature
at a point on the $\eta$ level set $\Gamma_{\eta} \mathrel{\mathop:}=  \left\{ \xi:d(\xi)=\eta\right\} $ can be expressed as
\begin{equation}
G_{\eta}=d_{xx}d_{yy}+d_{xx}d_{zz}+d_{yy}d_{zz}-d_{xy}^{2}-d_{xz}^{2}-d_{yz}^{2}.\label{gaussiancurv_exp2}
\end{equation}
\end{lem}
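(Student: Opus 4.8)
The plan is to exploit the well-known structure of the Hessian of a distance function. For a signed (or unsigned) distance function $d$, the gradient $\nabla d$ is a unit vector field (the normal to the level sets) wherever $d$ is differentiable, so differentiating $|\nabla d|^2 = 1$ gives $(D^2 d)\nabla d = 0$; that is, $\nabla d$ is an eigenvector of the Hessian $D^2 d$ with eigenvalue $0$. The remaining two eigenvalues of $D^2 d$, restricted to the tangent plane of the level set $\Gamma_\eta$, are precisely the principal curvatures $\kappa_1(\eta)$ and $\kappa_2(\eta)$ of $\Gamma_\eta$ at the point in question (with the sign convention induced by the choice of normal $\nabla d$). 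This is a standard fact about tubular neighborhoods and can be taken as the geometric input. Consequently the Gaussian curvature is $G_\eta = \kappa_1(\eta)\kappa_2(\eta)$, and this equals the product of the two nonzero eigenvalues of $D^2 d$.

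Next I would translate "product of the two nonzero eigenvalues'' into the coordinate expression on the right-hand side of~\eqref{gaussiancurv_exp2}. The $3\times 3$ symmetric matrix $D^2 d$ has eigenvalues $0, \kappa_1, \kappa_2$, so its characteristic polynomial is $\lambda(\lambda-\kappa_1)(\lambda-\kappa_2) = \lambda^3 - (\kappa_1+\kappa_2)\lambda^2 + \kappa_1\kappa_2\,\lambda$. Comparing with the general characteristic polynomial $\lambda^3 - \operatorname{tr}(D^2 d)\lambda^2 + c_2(D^2 d)\lambda - \det(D^2 d)$, and using $\det(D^2 d)=0$, we read off that $\kappa_1\kappa_2$ is exactly the second elementary symmetric function of the eigenvalues, i.e. the sum of the $2\times 2$ principal minors of $D^2 d$. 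Writing those three principal minors out in terms of the entries $d_{xx}, d_{yy}, d_{zz}, d_{xy}, d_{xz}, d_{yz}$ gives precisely
\[
G_\eta = d_{xx}d_{yy} + d_{xx}d_{zz} + d_{yy}d_{zz} - d_{xy}^2 - d_{xz}^2 - d_{yz}^2,
\]
which is~\eqref{gaussiancurv_exp2}.

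The main thing to be careful about — and what I expect to be the only real obstacle — is justifying the regularity and the eigenvalue claims: one must restrict to $|\eta|$ small enough that $d$ is $C^2$ in a tubular neighborhood of $\Gamma$ (guaranteed by $\Gamma$ being $C^2$ and compact, away from the boundary if $\Gamma$ is open), so that $D^2 d$ genuinely exists there, and one must invoke the tubular-neighborhood identification of the tangential eigenvalues of $D^2 d$ with the principal curvatures of $\Gamma_\eta$. An alternative, more self-contained route that avoids quoting that identification is to work in a local orthonormal frame at the point: choose coordinates so that $\nabla d = (0,0,1)^{\mathsf T}$, note the bottom row and column of $D^2 d$ vanish, observe that the upper-left $2\times 2$ block is then the Hessian of $d$ along the level set, which by the implicit-function-theorem description of $\Gamma_\eta$ as a graph is exactly its second fundamental form, hence its determinant is $G_\eta$; then observe that both the trace/second-symmetric-function combination and the quantity on the right-hand side of~\eqref{gaussiancurv_exp2} are coordinate-frame-independent (being built from invariants of $D^2 d$), so the identity established in the special frame holds in all frames. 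Either way the algebra is routine once the geometric fact is in place.
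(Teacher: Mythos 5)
Your proof is correct, but it takes a genuinely different route from the paper. The paper starts from the quoted level-set formula $G=\langle\nabla d,\operatorname{adj}(\operatorname{Hess}d)\,\nabla d\rangle$ and verifies by a direct term-by-term expansion and rearrangement, using $|\nabla d|=1$, that this collapses to the sum of the $2\times 2$ principal minors of the Hessian. You instead argue spectrally: differentiating $|\nabla d|^2=1$ gives $(\operatorname{Hess}d)\nabla d=0$, the restriction of the Hessian to the tangent plane of $\Gamma_\eta$ is (up to the sign convention of the normal, which is immaterial for the product) the shape operator, so the two nonzero eigenvalues are the principal curvatures of $\Gamma_\eta$; hence $G_\eta=\kappa_1\kappa_2$ is the second elementary symmetric function of the eigenvalues, i.e.\ the sum of the principal $2\times2$ minors, which is exactly \eqref{gaussiancurv_exp2}. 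Your version is shorter, coordinate-free, and explains why the identity holds rather than exhibiting a cancellation; its only external input is the identification of the tangential Hessian of $d$ with the second fundamental form of its level set, which you correctly flag and can either cite or derive by the adapted-frame/graph argument you sketch. The paper's computation, by contrast, leans on the adjugate formula from its reference and is purely algebraic; the two inputs are in fact equivalent, since $(\operatorname{Hess}d)\nabla d=0$ and $|\nabla d|=1$ force $\operatorname{adj}(\operatorname{Hess}d)=\kappa_1\kappa_2\,\nabla d\,\nabla d^{\mathsf T}$. Your argument also dovetails with the relations $d_xd_{xx}+d_yd_{xy}+d_zd_{xz}=0$, etc., that the paper derives anyway in the proof of Proposition~\ref{thm_jacobian_evalues}, so nothing new is needed beyond the regularity of $d$ near $\Gamma$, which you also address.
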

\begin{proof}

Starting with the definition of the Gaussian curvature $G$ for a surface (see \cite{hicks65}), we can obtain an expression for the Gaussian curvature of its $\eta$-level set in terms of $d$ as
\begin{align}
G &=\langle\nabla d,adj(Hess(d))\nabla d\rangle\nonumber \\
 & =d_{x}^{2}(d_{yy}d_{zz}-d_{yz}^{2})+d_{y}^{2}(d_{xx}d_{zz}-d_{xz}^{2})+d_{z}^{2}(d_{xx}d_{yy}-d_{xy}^{2})\notag\\
 & \,\,+2[d_{x}d_{y}(d_{xz}d_{yz}-d_{xy}d_{zz})+~d_{y}d_{z}(d_{xy}d_{xz}-d_{yz}d_{xx})\notag\label{gaussian_lastline}\\
 & \,\,+d_{x}d_{z}(d_{xy}d_{yz}-d_{xz}d_{yy})].
\end{align}
We show that this expression is the same as \eqref{gaussiancurv_exp2}
by rearranging the terms above and using the fact that close to $\Gamma$ the distance function satisfies $|\nabla d| = 1$.
First we rearrange the terms in $G$: 
\begin{eqnarray*}
G & = & d_{x}^{2}d_{yy}d_{zz}+d_{y}^{2}d_{xx}d_{zz}+d_{z}^{2}d_{xx}d_{yy}-d_{x}^{2}d_{yz}^{2}-d_{y}^{2}d_{xz}^{2}-d_{z}^{2}d_{xy}^{2}\\
 &  & +2[d_{x}d_{y}(d_{xz}d_{yz}-d_{xy}d_{zz})+~d_{y}d_{z}(d_{xy}d_{xz}-d_{yz}d_{xx})+d_{x}d_{z}(d_{xy}d_{yz}-d_{xz}d_{yy})],
\end{eqnarray*}
and rewrite each of the first six terms in terms of $|\nabla d|^{2}$,
e.g. 
\[
d_{x}^{2}d_{yy}d_{zz}=\underbrace{|\nabla d|^{2}}_{=1}d_{yy}d_{zz}-d_{y}^{2}d_{yy}d_{zz}-d_{z}^{2}d_{yy}d_{zz}=d_{yy}d_{zz}-d_{y}^{2}d_{yy}d_{zz}-d_{z}^{2}d_{yy}d_{zz}.
\]
Thus we have 
\begin{eqnarray}
 &  & d_{x}^{2}d_{yy}d_{zz}+d_{y}^{2}d_{xx}d_{zz}+d_{z}^{2}d_{xx}d_{yy}-d_{x}^{2}d_{yz}^{2}-d_{y}^{2}d_{xz}^{2}-d_{z}^{2}d_{xy}^{2}\nonumber \\
 & = & \underbrace{d_{xx}d_{yy}+d_{xx}d_{zz}+d_{yy}d_{zz}-d_{xy}^{2}-d_{xz}^{2}-d_{yz}^{2}}_{=G_\eta}-d_{y}^{2}d_{yy}d_{zz}-d_{z}^{2}d_{yy}d_{zz}\nonumber \\
 &  & -d_{x}^{2}d_{xx}d_{zz}-d_{z}^{2}d_{xx}d_{zz}-d_{y}^{2}d_{xx}d_{yy}-d_{x}^{2}d_{xx}d_{yy}\label{eq:GaussianCurv}\\
 &  & +d_{y}^{2}d_{yz}^{2}+d_{z}^{2}d_{yz}^{2}+d_{x}^{2}d_{xz}^{2}+d_{z}^{2}d_{xz}^{2}+d_{x}^{2}d_{xy}^{2}+d_{y}^{2}d_{xy}^{2}\nonumber \\
\nonumber 
\end{eqnarray}
Using \eqref{eq:GaussianCurv} and rearranging the rest of the terms
in \eqref{gaussian_lastline} we obtain $G=G_{\eta}$. \end{proof}
\begin{prop}
\label{thm_jacobian_evalues} Consider a $C^{2}$ compact surface
$\Gamma\subset\mathbb{R}^{n}$ ($n=2,3$) of codimension~$1$ and
let $d$ be defined as in \eqref{eq:distancefn}. Define the closest
point projection map $P_{\Gamma}$ as in \eqref{eq:cpm} for $\mathbf{x}\in\mathbb{R}^{n}$.
For $|\eta|$ sufficiently close to zero, let $\Gamma_{\eta}$ be the $\eta$ level
set of $d$ 
\begin{equation}
\Gamma_{\eta}\mathrel{\mathop:}=\left\{ \mathbf{x}:d(\mathbf{x})=\eta\right\} . \label{gamma_eta}
\end{equation}
Define the Jacobian $J_{\eta}$ as 
\[
J_{\eta}\mathrel{\mathop:}=\left\{ \begin{array}{ll}
1+\eta\kappa_{\eta} & \mbox{ if }n=2,\\
1{\color{red}{\color{black}+}}2\eta H_{\eta}+\eta^{2}G_{\eta} & \mbox{ if }n=3,
\end{array}\right.
\]
where $\kappa_{\eta}$ is the signed curvature of $\Gamma_{\eta}$
in 2D, and $H_{\eta}$ and $G_{\eta}$ are its Mean curvature and
Gaussian curvature respectively in 3D.

Then if $P'_{\Gamma} $ is the Jacobian
matrix of $P_{\Gamma}$ we have 
\begin{equation}
J_{\eta}=\left\{ \begin{array}{ll}
\sigma_{1}, & n=2,\\
\sigma_{1}\sigma_{2}, & n=3,
\end{array}\right.\label{jacobian_singularvalues}
\end{equation}
where $\sigma_1, \sigma_2$ are the first two singular values of the
Jacobian matrix $P'_{\Gamma}$. \end{prop}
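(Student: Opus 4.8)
The plan is to differentiate the explicit formula for the closest point mapping, diagonalise the resulting Jacobian matrix, and simply read off its singular values. Recall from the introduction that on the tubular neighbourhood of $\Gamma$ where $P_\Gamma$ is continuously differentiable one has $P_\Gamma(\mathbf{x}) = \mathbf{x} - d(\mathbf{x})\,\nabla d(\mathbf{x})$. Differentiating componentwise, $(P_\Gamma)_i = x_i - d\, d_{x_i}$, gives
\begin{equation*}
P'_\Gamma = I - \nabla d\,(\nabla d)^{T} - d\, D^2 d,
\end{equation*}
where $D^2 d$ denotes the Hessian of $d$. Each of the three terms is symmetric, so $P'_\Gamma$ is symmetric; hence its singular values are exactly the absolute values of its eigenvalues, and the problem reduces to computing the spectrum of $P'_\Gamma$.

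First I would isolate the normal direction. Near $\Gamma$ the distance function satisfies $|\nabla d| = 1$ (as already used in Lemma~\ref{gaussiancurv_lemma}); differentiating $|\nabla d|^2 \equiv 1$ gives $D^2 d\,\nabla d = 0$, while $(I - \nabla d(\nabla d)^{T})\nabla d = \nabla d - \nabla d = 0$. Thus $\nabla d$, the unit normal to the level set $\Gamma_\eta$ passing through $\mathbf{x}$, lies in $\ker P'_\Gamma$, which produces the vanishing singular value $\sigma_n = 0$. On the orthogonal complement $T_{\mathbf{x}}\Gamma_\eta = (\nabla d)^{\perp}$ the matrix $I - \nabla d(\nabla d)^{T}$ acts as the identity, so $P'_\Gamma$ restricts there to $I - \eta\, D^2 d|_{T_{\mathbf{x}}\Gamma_\eta}$, using $d(\mathbf{x}) = \eta$.

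Next I would invoke the standard relationship between the Hessian of the signed distance function and the geometry of its level sets: the restriction of $D^2 d$ to $T_{\mathbf{x}}\Gamma_\eta$ is minus the shape operator (Weingarten map) of $\Gamma_\eta$ at $\mathbf{x}$, so its eigenvalues are $-\kappa_1,\dots,-\kappa_{n-1}$, where $\kappa_1,\dots,\kappa_{n-1}$ are the principal curvatures of $\Gamma_\eta$ at $\mathbf{x}$ taken in the sign convention of the statement (the one for which $J_\eta = \prod_{i}(1+\eta\kappa_i)$). Consequently the remaining eigenvalues of $P'_\Gamma$ are $1+\eta\kappa_1,\dots,1+\eta\kappa_{n-1}$. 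Since $\Gamma$ is $C^2$ and compact, the curvatures are uniformly bounded, so for $|\eta|$ small all of these are positive and are therefore the nonzero singular values $\sigma_1,\dots,\sigma_{n-1}$. Multiplying them out completes the proof: for $n=2$, $\sigma_1 = 1 + \eta\kappa_\eta = J_\eta$ and $\sigma_2 = 0$; for $n=3$, with $2H_\eta = \kappa_1 + \kappa_2$ and $G_\eta = \kappa_1\kappa_2$ (equivalently, using Lemma~\ref{gaussiancurv_lemma} for $G_\eta$),
\begin{equation*}
\sigma_1\sigma_2 = (1+\eta\kappa_1)(1+\eta\kappa_2) = 1 + 2\eta H_\eta + \eta^2 G_\eta = J_\eta.
\end{equation*}

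The only genuinely nontrivial step is the geometric identification in the third paragraph — that the tangential part of $D^2 d$ is the negative of the Weingarten map of $\Gamma_\eta$ — together with tracking the signs so that $\kappa_\eta$, $H_\eta$, $G_\eta$ agree with the definitions entering $J_\eta$. A clean way to make this precise is to fix $\mathbf{x}$, pick Cartesian coordinates centred there with the $z$-axis along $\nabla d(\mathbf{x})$, represent $\Gamma_\eta$ locally as a graph $z = \phi(x,y)$ with $\nabla\phi(0) = 0$, and compute $D^2 d(\mathbf{x})$ directly (for instance from $|\nabla d| \equiv 1$, or from $d(\mathbf{x}+\eta\nabla d(\mathbf{x})) = \eta$); the diagonal entries then reproduce the principal curvatures of the graph, which is exactly the normalisation used for $\kappa_\eta$ in $J_\eta$. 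A minor point to record is the range of validity: $d$ and $P_\Gamma$ are $C^1$ only in a tubular neighbourhood of $\Gamma$ whose width is controlled by the reach of $\Gamma$ — positive because $\Gamma$ is $C^2$ and compact — and when $\Gamma$ is open one further restricts to the portion of that neighbourhood lying over the interior of $\Gamma$, away from its boundary.
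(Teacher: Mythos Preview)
Your argument is correct and follows the same high-level arc as the paper --- show $P'_\Gamma$ is symmetric, exhibit $\nabla d$ as a null eigenvector, and then identify the remaining eigenvalues --- but the execution is different and worth noting. The paper works entirely in coordinates: it writes out the entries of $P'_\Gamma$ explicitly, verifies $P'_\Gamma\nabla d=0$ using the relations obtained by differentiating $|\nabla d|^2=1$, and in three dimensions computes the characteristic polynomial $\chi(\lambda)=-\lambda\bigl(\lambda^2-(2-\eta\Delta d)\lambda-Q\bigr)$, reading off the product $\lambda_1\lambda_2=-Q=1-\eta\Delta d+\eta^2 G_\eta$ and invoking Lemma~\ref{gaussiancurv_lemma} to recognise the coefficient of $\eta^2$ as the Gaussian curvature. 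Your route bypasses both the explicit characteristic polynomial and Lemma~\ref{gaussiancurv_lemma} by writing $P'_\Gamma = I-\nabla d(\nabla d)^T-\eta\,D^2 d$ and invoking the standard identification of $D^2 d|_{T_{\mathbf{x}}\Gamma_\eta}$ with (minus) the Weingarten map, which hands you the individual eigenvalues $1+\eta\kappa_i$ directly. The trade-off is clear: the paper's computation is longer but fully self-contained, while yours is shorter and more conceptual but imports a piece of differential geometry that must be checked with the correct sign convention --- exactly the point you flag as the ``only genuinely nontrivial step.'' Your suggested verification (rotate so that $\nabla d$ is the $z$-axis and compare with the graph representation) is a good way to pin this down and would make the argument complete.
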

\begin{proof}
The distance function $d$ satisfies the property $d(\mathbf{x})=0$
for $\mathbf{x}\in\Gamma.$ Also, since $\Gamma$ is $C^{2}$, its distance function $d$ belongs to $C^{2}(\mathbb{R}^{n},\mathbb{R})$;
see e.g. \cite{delfour_zolesio01,federer59}. It follows that the
order of the mixed partial derivatives does not matter. In addition,
the normals to a smooth interface do not focus right away so that
the distance function is smooth in a tubular neighborhood $T$ around
$\Gamma$, and is linear with slope one along the normals. Therefore
we have 
\begin{equation}
|\nabla d|=1\mbox{ for all }\mathbf{x}\in T.\label{gradd_1}
\end{equation}
The third important fact is that the Laplacian of $d$ at a point
$\mathbf{x}$ gives (up to a constant related to the dimension) the
mean curvature of the isosurface of $d$ passing through $\mathbf{x}$,
namely 
\begin{equation}
\Delta d(\mathbf{x})=(1-n)H(\mathbf{x}),\label{laplacian_d_curvature}
\end{equation}
where $H(\mathbf{x})$ is the Mean curvature of the level set $\left\{ \mathbf{y}:d(\mathbf{y})=d(\mathbf{x})\right\} $.
Differentiating \eqref{gradd_1} with respect to each variable gives
the following equations in three dimensions: 
\begin{align}
d_{x}d_{xx}+d_{y}d_{xy}+d_{z}d_{xz}=0,\label{3d_eq1}\\
d_{x}d_{yx}+d_{y}d_{yy}+d_{z}d_{yz}=0,\label{3d_eq2}\\
d_{x}d_{zx}+d_{y}d_{zy}+d_{z}d_{zz}=0.\label{3d_eq3}
\end{align}
In particular the two dimensional case can be derived by assuming
that the distance function is constant in $z.$

\paragraph{Two dimensions. }

In that case the Jacobian matrix $P'_{\Gamma}$
of the closest point projection map is 
\[
P'_{\Gamma}=\left(\begin{array}{cc}
1-d_{x}^{2}-dd_{xx} & -(d_{y}d_{x}+dd_{yx})\\
-(d_{x}d_{y}+dd_{xy}) & 1-d_{y}^{2}-dd_{yy}
\end{array}\right).
\]
Since Schwartz' Theorem holds, we have $d_{xy}=d_{yx}$ making $P'_{\Gamma}$
a real symmetric matrix. It is therefore diagonalizable with eigenvalues
$0$ and $1-d\Delta d$. Indeed, we have 
\begin{eqnarray*}
P'_{\Gamma}\nabla d & = & \left(\begin{array}{c}
d_{x}(\underbrace{1-d_{x}^{2}-d_{y}^{2}}_{=0\mbox{ by }\eqref{gradd_1}\mbox{ in 2D}})-d(\underbrace{d_{x}d_{xx}+d_{y}d_{yx}}_{=0\mbox{ by }\eqref{3d_eq1}\mbox{ in 2D}})\\
\\
d_{y}(\underbrace{1-d_{x}^{2}-d_{y}^{2}}_{=0\mbox{ by }\eqref{gradd_1}\mbox{ in 2D}})-d(\underbrace{d_{y}d_{yy}+d_{x}d_{xy}}_{=0\mbox{ by }\eqref{3d_eq2}\mbox{ in 2D}})
\end{array}\right)=\mathbf{0},
\end{eqnarray*}
and for $\mathbf{v}=\left(\begin{array}{c}
-d_{y}\\
d_{x}
\end{array}\right)$, 
\begin{eqnarray*}
P'_{\Gamma}\mathbf{v} & = & \left(\begin{array}{c}
-d_{y}+d_{y}d_{x}^{2}+d_{y}dd_{xx}-d_{x}^{2}d_{y}-dd_{x}d_{xy}\\
d_{y}^{2}d_{x}+d_{y}dd_{xx}-d_{y}^{2}d_{x}-d_{x}dd_{yy}
\end{array}\right)\\
 & = & \left(\begin{array}{c}
-d_{y}\\
d_{x}
\end{array}\right)+d\left(\begin{array}{c}
d_{y}d_{xx}-d_{x}d_{xy}\\
d_{y}d_{xy}-d_{x}d_{yy}
\end{array}\right)\\
\\
 & = & v+d\left(\begin{array}{c}
-\Delta d(-d_{y})-\underbrace{(d_{y}d_{yy}+d_{x}d_{xy})}_{=0\mbox{ by }\eqref{3d_eq2}\mbox{ in 2D}}\\
\\
-\Delta d(dx)+\underbrace{d_{x}d_{xx}+d_{y}d_{xy}}_{=0\mbox{ by }\eqref{3d_eq1}\mbox{ in 2D}}
\end{array}\right)\\
 & = & (1-d\Delta d)\mathbf{v}.
\end{eqnarray*}
Since $||\mathbf{v}||=1$, $\mathbf{v}$ is an eigenvector corresponding
to the eigenvalue $\lambda=1-d\Delta d$. Thus, for $\mathbf{x}$
such that $d(\mathbf{x})=\eta$ we have that the eigenvalue $\lambda$ of $P'_{\Gamma}$
satisfies 
\[
\lambda=1-\eta\Delta d=1+\eta\kappa_{\eta}
\]
by \eqref{laplacian_d_curvature}. Since $1+\eta\kappa_{\eta}\geq0$,
it follows that $\lambda$ coincides with the singular value of $P'_{\Gamma}$
and hence
\[
\sigma_1=1+\eta\kappa_{\eta}.
\]

\paragraph{Three dimensions. }

Since for $|\eta|$ sufficiently close to $0$ the distance function is $C^2$,  the Jacobian matrix 
\[
P'_{\Gamma}=\left(\begin{array}{ccc}
1-d_{x}^{2}-dd_{xx} & -(d_{y}d_{x}+dd_{yx}) & -(d_{z}d_{x}+dd_{zx})\\
-(d_{x}d_{y}+dd_{xy}) & 1-d_{y}^{2}-dd_{yy} & -(d_{z}d_{y}+dd_{zy})\\
-(d_{x}d_{z}+dd_{xz}) & -(d_{y}d_{z}+dd_{yz}) & 1-d_{z}^{2}-dd_{zz}
\end{array}\right),
\]
is a real symmetric matrix which is diagonalizable with one zero eigenvalue and two other eigenvalues $\lambda_{1}$ and $\lambda_{2}$. Indeed
using \eqref{3d_eq1},\eqref{3d_eq2},\eqref{3d_eq3} and \eqref{gradd_1}
we can show that 
\[
P'_{\Gamma}\nabla d=\mathbf{0}.
\]
Now consider $\mathbf{x}$ such that $d(\mathbf{x)}=\eta$. Then,
the characteristic polynomial $\chi(\lambda)$ of $P'_{\Gamma}$
is 
\[
\chi(\lambda)=-\lambda\left(\lambda^{2}-(2-\eta\Delta d)\lambda-Q\right),
\]
where $Q=-G_\eta\eta^{2}+\eta\Delta d-1$ with $G_\eta$ defined in \eqref{gaussiancurv_exp2}.
Since the other two eigenvalues of $P'_{\Gamma}$
are the solutions of the quadratic equation $\lambda^{2}~-~(2~-~\eta\Delta d)\lambda~-~Q~=~0$,
it follows that 
\[
\lambda_{1}\lambda_{2}=-Q=1-\eta\Delta d+\eta^{2}G_\eta=1+2\eta H_{\eta}+\eta^{2}G_{\eta}.
\]
Since $1+2\eta H_{\eta}+\eta^{2}G_{\eta}\geq0$, it follows that
\[
\sigma_{1}\sigma_{2}=1+2\eta H_{\eta}+\eta^{2}G_{\eta},
\]
where $\sigma_{1}$ and $\sigma_{2}$ are singular
values of $P'_{\Gamma}$. 
\end{proof}
This leads to the following proposition: 
\begin{thm}
Consider $\Gamma$ a curve in 2D or surface in 3D with $C^{2}$ boundaries if it is not closed,
and define $d:\mathbb{R}^{n}\mapsto{\mathbb{R}^{+}\cup\{0\}}$ ($n=2,3)$ to
be the distance function to $\Gamma$ with $P_{\Gamma}:\mathbb{R}^{n}\mapsto\Gamma$
the closest point mapping to $\Gamma$. Then for $\epsilon \max_{x \in \Gamma} |\kappa(x)| < 1$ for any $\kappa(x)$ principal curvatures of $\Gamma$ at $x$, we have
\begin{equation}
\int_{\Gamma}v(\mathbf{x)}d\mathbf{x}=\int_{\mathbb{R}^{n}}v(P_{\Gamma}(\mathbf{x})\delta_{\epsilon}(d(\mathbf{x}))\Sigma(\mathbf{x})d\mathbf{x},\label{eq:int_openinterface-1}
\end{equation}
where $\delta_{\epsilon}$ is an averaging kernel 
and $\Sigma(\mathbf{x})$is defined as 
\[
\Sigma(\mathbf{x})=\left\{ \begin{array}{ll}
\sigma_{1}(\mathbf{x}), & n=2,\\
\sigma_{1}(\mathbf{x})\sigma_{2}(\mathbf{x}), & n=3,
\end{array}\right.
\]
where $\sigma_{j}(\mathbf{x})$ , $j=1,2,$ is the $j$-th singular
value of the Jacobian matrix $\displaystyle P'_{\Gamma}$
evaluated at $\mathbf{x}.$ \end{thm}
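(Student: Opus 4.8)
The plan is to run the argument outlined in the Introduction for the signed–distance case, now using Proposition~\ref{thm_jacobian_evalues} to identify the Jacobian with $\Sigma$. First I would set up the tube $T_{\epsilon}\mathrel{\mathop:}=\{\mathbf{x}:|d(\mathbf{x})|<\epsilon\}$ and record what the hypothesis $\epsilon\,\max_{\mathbf{x}\in\Gamma}|\kappa(\mathbf{x})|<1$ buys: no normal ray from $\Gamma$ reaches a focal point within distance $\epsilon$, so on $T_{\epsilon}$ the closest point map $P_{\Gamma}$ is well defined, $|\nabla d|=1$ (and $d$ is $C^{2}$ there when $\Gamma$ is closed), and the level sets $\Gamma_{\eta}$, $|\eta|<\epsilon$, foliate $T_{\epsilon}$, with $P_{\Gamma}|_{\Gamma_{\eta}}\colon\Gamma_{\eta}\to\Gamma$ a diffeomorphism in the closed case. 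This is just the classical tubular neighborhood theorem under the stated curvature bound.

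Second, for each fixed $\eta$ with $|\eta|<\epsilon$ I would apply the change of variables formula for the curve/surface integral under the diffeomorphism $P_{\Gamma}|_{\Gamma_{\eta}}$: if $J_{\eta}$ denotes the absolute Jacobian of this map, then $\int_{\Gamma}v\,dS=\int_{\Gamma_{\eta}}v(P_{\Gamma}(\mathbf{x}))\,J_{\eta}(\mathbf{x})\,dS$. By Proposition~\ref{thm_jacobian_evalues}, $J_{\eta}$ equals $1+\eta\kappa_{\eta}$ ($n=2$) resp. $1+2\eta H_{\eta}+\eta^{2}G_{\eta}$ ($n=3$), and it equals the product of the nonzero singular values of the symmetric matrix $P'_{\Gamma}$; since $\nabla d$ spans the zero–singular–value direction of $P'_{\Gamma}$ and is normal to $\Gamma_{\eta}$, those nonzero singular values are exactly $\sigma_{1}$ (and $\sigma_{2}$). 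Hence $J_{\eta}=\Sigma$ along $\Gamma_{\eta}$, giving $\int_{\Gamma_{\eta}}v(P_{\Gamma}(\mathbf{x}))\,\Sigma(\mathbf{x})\,dS=\int_{\Gamma}v\,dS$ for every such $\eta$.

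Third, I would multiply this identity by $\delta_{\epsilon}(\eta)$, integrate over $\eta\in(-\epsilon,\epsilon)$, and use $\int_{-\epsilon}^{\epsilon}\delta_{\epsilon}=1$ to obtain $\int_{\Gamma}v\,dS=\int_{-\epsilon}^{\epsilon}\delta_{\epsilon}(\eta)\big(\int_{\Gamma_{\eta}}v(P_{\Gamma})\,\Sigma\,dS\big)\,d\eta$. The coarea formula~\cite{federer69} applied to $d$ on $T_{\epsilon}$ (where $|\nabla d|=1$ a.e.) rewrites the right-hand side as $\int_{T_{\epsilon}}v(P_{\Gamma}(\mathbf{x}))\,\delta_{\epsilon}(d(\mathbf{x}))\,\Sigma(\mathbf{x})\,d\mathbf{x}$, and since $\delta_{\epsilon}$ is supported in $[-\epsilon,\epsilon]$ the integrand already vanishes off $T_{\epsilon}$, so the domain of integration enlarges to $\mathbb{R}^{n}$, which is \eqref{eq:int_openinterface-1}. (Here $v$ is continuous on the compact set $\Gamma$ and $\Sigma,\delta_{\epsilon}$ are bounded, so everything is integrable and coarea applies after splitting into positive and negative parts.)

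The main obstacle is the \emph{open} case. When $\Gamma$ has a boundary $\partial\Gamma$, $\Gamma_{\eta}$ is no longer a graph over $\Gamma$: near $\partial\Gamma$ it contains spherical cap pieces on which $d$ may fail to be $C^{2}$ and $P_{\Gamma}$ is not injective, so the clean change of variables breaks down there. I would resolve this by splitting $\Gamma_{\eta}=\Gamma_{\eta}^{g}\cup\Gamma_{\eta}^{c}$, where $\Gamma_{\eta}^{g}$ is the set of points whose closest point lies in $\mathrm{int}\,\Gamma$ and $\Gamma_{\eta}^{c}$ the rest. On $\Gamma_{\eta}^{g}$ the map $P_{\Gamma}|_{\Gamma_{\eta}^{g}}$ is a $C^{1}$ diffeomorphism onto $\mathrm{int}\,\Gamma$ (tubular neighborhood theorem for $\mathrm{int}\,\Gamma$) and Step 2 goes through verbatim, using $\int_{\mathrm{int}\,\Gamma}v\,dS=\int_{\Gamma}v\,dS$. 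On $\Gamma_{\eta}^{c}$, $P_{\Gamma}$ maps (a.e.-differentiably, as $P_{\Gamma}$ is locally Lipschitz on $T_{\epsilon}$ and one may invoke Rademacher) into the $(n-2)$-dimensional set $\partial\Gamma$, so $P_{\Gamma}|_{\Gamma_{\eta}}$ has rank at most $n-2$ a.e.; since its nonzero singular values are precisely $\sigma_{1},\dots,\sigma_{n-1}$, the product $\Sigma=\sigma_{1}\cdots\sigma_{n-1}$ vanishes a.e. there and $\Gamma_{\eta}^{c}$ contributes nothing to either side. The remaining points are bookkeeping: $\Gamma_{\eta}^{c}$ is $(n-1)$-rectifiable of finite measure, so it does not affect the coarea step, and the measure-zero set where $d$ fails to be $C^{2}$ is harmless because every identity above is used only a.e. in $\eta$ and a.e. on $T_{\epsilon}$.
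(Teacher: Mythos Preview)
Your proof is essentially the paper's: in the closed case you combine Proposition~\ref{thm_jacobian_evalues} with the averaging/coarea derivation, and in the open case you show that $\Sigma$ vanishes on the cap portions of $\Gamma_\eta$ so they contribute nothing. The paper's version of this last step is just the concrete form of your rank argument: it notes that $P_\Gamma$ is literally constant on the semi-circles (2D) and on the spherical pieces at corners (3D), and constant along the radial direction on the cylindrical pieces along edges (3D), so at least one singular value is zero there.

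One small slip to fix: in the open case $P_\Gamma|_{\Gamma_\eta^{g}}$ is a \emph{two-to-one} cover of $\mathrm{int}\,\Gamma$, not a diffeomorphism, because for the unsigned distance the level set $\Gamma_\eta$ sits on both sides of $\Gamma$. The extra factor of~$2$ is then absorbed by the kernel normalization (only $\eta\ge 0$ occurs, and an even kernel on $[-\epsilon,\epsilon]$ has mass $\tfrac12$ on $[0,\epsilon]$), so your conclusion is unaffected; the paper leaves this point implicit as well.
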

\begin{proof}
If $\Gamma$ is closed we combine Equation \eqref{intformulation}
with the result $J(\mathbf{x})=\Sigma(\mathbf{x})$ from Equation~\eqref{jacobian_singularvalues}
of Proposition \ref{thm_jacobian_evalues}.

If $\Gamma$ is open there is a little more to show since Equation
\eqref{intformulation} was only derived for closed manifolds. Before
we state the result, it is necessary to understand how $\Gamma_{\eta}$ defined in~\eqref{gamma_eta}  (an $\eta-$level
set of $d$) looks like for an open curve in two dimensions and for a surface with boundaries in three dimensions. 

\noindent
In two dimensions, $\Gamma_{\eta}$ consists of a flat tubular part on either
side of the curve and two semi circles at the two ends of the curve.
See Figure~\ref{fig_levelset_curve2D}.

\noindent 
In three dimensions $\Gamma$ is in general made up of three distinct
parts: the interior part, the edges of the boundary and the corners.
If we assume that $\Gamma$ has $N$ edges then we can write $\Gamma=\Gamma^{o}\cup(\cup_{i=1}^{N}E_{i})\cup(\cup_{i=1}^{N}C_{i})$,
where $\Gamma^{o}$ is the interior of $\Gamma$, $E_{i}$ is the
$i$-th edge of the boundary of $\Gamma$ and $C_{i}$ is its $i$
-th corner. In that setting we can write $\Gamma_{\eta}=I_{\eta}\cup(\cup_{i=1}^{N}T_{i}^{\eta})\cup(\cup_{i=1}^{N}S_{i}^{\eta})$,
where $I_{\eta}$ is the inside portion of $\Gamma_{\eta}$, $T_{i}^{\eta}$
is the cylindrical part of $\Gamma_{\eta}$ representing the set of
points located at a distance $\eta$ from the $i$ -th edge $E_{i}$,
and finally $S_{i}^{\eta}$ is the spherical part of $\Gamma_{\eta}$
representing the set of points located at a distance $\eta$ from
the $i$ -th corner $C_{i}$. See Figure~\ref{fig_levelset_surface3D}.

\noindent
In both cases we need to integrate over $\Gamma_{\eta}$
and then subtract the two semi circles at the two end points of the
curve (in two dimensions) or subtract the portions of sphere at the
corners of the surface and the portions of cylinders at the edges
of the surface (in three dimensions). However, it turns out that the subtraction
is unnecessary since $\Sigma(\mathbf{x})=0$ on each of the subtracted
pieces as shown below.

\paragraph*{Two dimensions.}On the semi-circle around the end point of
a curve, the closest point mapping is {\it constant} since all points on
the semi-circle $\Gamma_{\eta}$ map to the end point. As a result, the singular values of the
Jacobian matrix of the closest point mapping are all zeros and thus
$\Sigma(\mathbf{x})=0$ on the semi-circles around the end points
of a curve. 
\paragraph*{Three dimensions.} As in two dimensions, on the portions of
sphere around a corner point of a surface, the closest point mapping
is {\it constant} and thus $\Sigma(\mathbf{x})=0$. On the portion of cylinders,
the closest point mapping is constant along the radial dimension (one
of the principal directions or singular vector) resulting of the singular
value along that direction to be zero. Since $\Sigma(\mathbf{x})$
is the product of the singular values, it follows that $\Sigma(\mathbf{x})=0$
on the portion of cylinders as well. 
Consequently, Equation~\eqref{eq:int_openinterface-1} holds for any $C^{2}$
curve or surface with $C^2$ boundaries of codimension~1.
\end{proof}

\subsection{Codimension 2}

We consider a $C^{2}$ curve in $\mathbb{R}^{3}$ denoted by $\Gamma$ and let $\gamma(s)$ be a parameterization by arclength of $\Gamma.$
We denote by $d:\mathbb{R}^{3}\mapsto{\mathbb{R}^{+}\cup\{0\}}$ the
distance function to $\Gamma$ and let $P_{\Gamma}:\mathbb{R}^{3}\mapsto\Gamma$
be the closest point mapping to $\Gamma$. We consider a parameterization of the
\emph{tubular part }of the level surface for $\eta\in[0,\epsilon]$
defined as 
\[
\mathbf{x}(s,\theta,\eta)\mathrel{\mathop:}\gamma(s)+\eta\cos\theta\vec{\mathbf{N}}(s)+\eta\sin\theta\vec{\mathbf{B}}(s),
\]
where $\vec{\mathbf{T}}=\frac{d\gamma}{ds}$, $\vec{\mathbf{N}}$
and $\vec{\mathbf{B}}$ constitute the Frenet frame for $\gamma$ as
illustrated in Figure~\ref{fig_frame_diagram}. As in the previous
section, if $\Gamma$ is closed then $d$ is the signed distance function
to $\Gamma$.

If we project a point $\mathbf{x}$ on the tubular part of the level
surface $\Gamma_{\eta}$ defined in~\eqref{gamma_eta}, we have $P_{\Gamma}(\mathbf{x(s,\theta,\eta))=\gamma(s)}$.
If $L$ is the length of the curve it follows that 
\begin{align}
\int_{0}^{2\pi}\int_{0}^{L}g(P_{\Gamma}(\mathbf{x(s,\theta,\eta)))|\mathbf{x_{s}\times\mathbf{x_{\theta}}}|}dsd\theta & =\int_{0}^{2\pi}\int_{0}^{L}g(\gamma(s))\eta(1-\eta\kappa(s)\cos\theta)dsd\theta,\label{eq:integral g}\\
 & =\eta\int_{0}^{L}g(\gamma(s))\int_{0}^{2\pi}(1-\eta\kappa\cos\theta)d\theta ds,\nonumber \\
 & =2\pi\eta\int g(\gamma(s))ds.\nonumber 
\end{align}

Note that the tubular part of the level surface $\Gamma_{\eta}$
does not contain the two hemispheres of $\Gamma_{\eta}$ which are located at the two end points of the curve $\Gamma$. Thus,

\begin{equation}
\int_{\Gamma_{\eta} \setminus\left\{ C_{1}\cup C_{2}\right\} }g(P_{\Gamma}(\mathbf{x}))dS_{\mathbf{x}}=2\pi\eta\int_{\Gamma}gds,\label{eq:intgoverGamma}
\end{equation}
where $C_{1}$ and $C_{2}$ are the two hemispheres of the level surface
$\Gamma_{\eta}$ located at the two end points of the curve
$\Gamma$. Consequently, for sufficiently small $\epsilon$ and by
the coarea formula we obtain

\begin{align*}
\int_{\Gamma}g(\gamma(s))ds & =\frac{1}{2\pi}\int_{0}^{\epsilon}\left(\frac{1}{\eta}\int_{\Gamma_{\eta} \setminus\left\{ C_{1}\cup C_{2}\right\} }g(P_{\Gamma}(\mathbf{x))}\right)K_{\epsilon}(\eta)d\eta,\\
 & =\frac{1}{2\pi}\int_{\mathbb{R}^{3}}g(P_{\Gamma}(\mathbf{x))}\frac{K_{\epsilon}(d)}{d}\chi_{(C_{1}\cup C_{2})^{c}}(\mathbf{x)}d\mathbf{x},
\end{align*}
where $K_{\epsilon}$ is a $C^{1}$ averaging kernel supported in
$[0,\epsilon]$ and $\chi_{(C_{1}\cup C_{2})^{c}}(\mathbf{x)}$ is
the characteristic function of the set $(C_{1}\cup C_{2})^{c}$. 
Because of the term $\frac{K_{\epsilon}(d)}{d}$ in the above equation and for better accuracy, we choose a kernel $K_{\epsilon}$ that satisfies the condition $K_{\epsilon}'(0) =0$.
In
our numerical simulations we consider the kernel
\begin{equation}
K_{\epsilon}^{1,1}(\eta)=\frac{1}{\epsilon}\left(1-\cos\left(2\pi\frac{\eta}{\epsilon}\right)\right)\chi_{[0,\epsilon]}(\eta).\label{eq:kernel_curve3D_1}
\end{equation}

Since the formulation above does not use the two hemispheres located
at both end points of the curve, in order to integrate over the tubular
part of $\Gamma_{\eta}$ only, it is necessary
to subtract the integration over each of the hemispheres $C_{1}$
and $C_{2}$ . The result can be summarized in the following proposition: 
\begin{prop}
Consider a single $C^{2}$ curve $\Gamma$ in $\mathbb{R}^{3}$ parameterized
by $\gamma(s)$ where $s$ is the arclength parameter, and let $d$
be the distance function to $\Gamma$. We define $K_{\epsilon}$ to
be a $C^{1}$ averaging kernel compactly supported in $[0,\epsilon]$
and $P_{\Gamma}:\mathbb{R}^{3}\mapsto\Gamma$ to be the closest point
mapping to $\Gamma$.

If $g$ is a continuous function defined on $\Gamma$ then for sufficiently
small $\epsilon>0$ we have 
\begin{equation}
\int_{\Gamma}g(\gamma(s))ds=\frac{1}{2\pi}\int_{\mathbb{R}^{3}}g(P_{\Gamma}(\mathbf{x)})\frac{K_{\epsilon}(d(\mathbf{x}))}{d(\mathbf{x})}d\mathbf{x}-2\int_{0}^{\epsilon}g(\mathbf{x}_{\eta})\eta K_{\epsilon}(\eta)d\eta,\label{eq:FormulationCurve3D}
\end{equation}
where $\mathbf{x}_{\eta}$ is a point on a sphere of radius $\eta$. 
\end{prop}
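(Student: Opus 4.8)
The plan is to start from the identity derived just above the statement,
\[
\int_{\Gamma}g(\gamma(s))\,ds
=\frac{1}{2\pi}\int_{\mathbb{R}^{3}}g(P_{\Gamma}(\mathbf{x}))\,
\frac{K_{\epsilon}(d(\mathbf{x}))}{d(\mathbf{x})}\,
\chi_{(C_{1}\cup C_{2})^{c}}(\mathbf{x})\,d\mathbf{x},
\]
and to rewrite it as the full integral over $\mathbb{R}^{3}$ minus a correction localized on the two end caps. Concretely, I would write $\chi_{(C_{1}\cup C_{2})^{c}} = 1 - \chi_{C_{1}} - \chi_{C_{2}}$ (valid because $C_{1}$ and $C_{2}$ are disjoint for $\epsilon$ small, the curve being $C^{2}$ and of finite length), so that the right-hand side splits into the claimed full integral $\frac{1}{2\pi}\int_{\mathbb{R}^{3}}g(P_{\Gamma}(\mathbf{x}))\frac{K_{\epsilon}(d)}{d}\,d\mathbf{x}$ minus the two contributions $\frac{1}{2\pi}\int_{C_{i}}g(P_{\Gamma}(\mathbf{x}))\frac{K_{\epsilon}(d(\mathbf{x}))}{d(\mathbf{x})}\,d\mathbf{x}$, $i=1,2$. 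It then remains to evaluate each end-cap integral and show it equals $\int_{0}^{\epsilon} g(\mathbf{x}_{\eta})\eta K_{\epsilon}(\eta)\,d\eta$.

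For the end-cap evaluation I would use spherical coordinates centered at the endpoint $\gamma$ evaluates to at that end (the relevant corner point), parameterized by radius $\eta\in[0,\epsilon]$ and the solid angle over the hemisphere $C_{i}$ facing away from the curve. On $C_{i}$ one has $d(\mathbf{x})=\eta$ (the distance to $\Gamma$ is realized at the endpoint) and $P_{\Gamma}(\mathbf{x})$ is constant, equal to that endpoint; the spherical volume element is $\eta^{2}\,d\eta\,d\omega$. Hence
\[
\frac{1}{2\pi}\int_{C_{i}}g(P_{\Gamma}(\mathbf{x}))\frac{K_{\epsilon}(d(\mathbf{x}))}{d(\mathbf{x})}\,d\mathbf{x}
=\frac{1}{2\pi}\int_{0}^{\epsilon}\!\!\int_{\text{hemisphere}} g(\mathbf{x}_{\eta})\,\frac{K_{\epsilon}(\eta)}{\eta}\,\eta^{2}\,d\omega\,d\eta
=\int_{0}^{\epsilon} g(\mathbf{x}_{\eta})\,\eta\,K_{\epsilon}(\eta)\,d\eta,
\]
since the hemisphere has solid angle $2\pi$. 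Summing the two identical-looking contributions for $i=1,2$ produces the factor $2$ in front of $\int_{0}^{\epsilon}g(\mathbf{x}_{\eta})\eta K_{\epsilon}(\eta)\,d\eta$, and rearranging yields \eqref{eq:FormulationCurve3D}. The notation $\mathbf{x}_{\eta}$ for "a point on a sphere of radius $\eta$" is exactly the argument $P_{\Gamma}$ takes on that cap, so the formula is consistent with $g$ being evaluated at the moving endpoint.

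The only genuinely delicate point is geometric rather than computational: one must justify that for $\epsilon$ small enough the level surface $\Gamma_{\eta}$ decomposes cleanly as the tubular part together with exactly two hemispherical caps $C_{1},C_{2}$, that these caps are disjoint from each other and from the tube, and that on each cap the closest-point map is the constant endpoint while $d=\eta$. This follows from $\Gamma$ being $C^{2}$ with finite length: the tubular neighborhood theorem gives a reach $r_{0}>0$ within which the normal exponential map of the tube is a diffeomorphism, and near each endpoint the distance function transitions to the radial distance to that point; choosing $\epsilon$ smaller than $r_{0}$ and smaller than half the curve length (so the two caps cannot meet) makes the decomposition rigorous. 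Everything else — the coordinate change, the solid-angle integration, the bookkeeping of the factor $2$ — is routine once this structural fact is in hand.
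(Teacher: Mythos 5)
Your proposal is correct and follows essentially the same route as the paper: the proposition there is stated as a direct consequence of the identity with the characteristic function $\chi_{(C_{1}\cup C_{2})^{c}}$ derived immediately before it, with the hemispherical-cap correction evaluated exactly as you do (the paper only writes this computation out for $g\equiv 1$, obtaining $2\pi\epsilon$). Your explicit spherical-coordinate evaluation of the two half-ball contributions, using that $P_{\Gamma}$ is constant there and $d(\mathbf{x})=\eta$, together with reading the paper's loose notation $g(\mathbf{x}_{\eta})$ as $g(P_{\Gamma}(\mathbf{x}_{\eta}))$ (i.e.\ $g$ at the corresponding endpoint), supplies precisely the bookkeeping the paper leaves implicit.
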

Note that for the computation of the length of a curve, the correction
terms given by integrating over both $C_{1}$ and $C_{2}$ is 
\begin{align*}
\int_{0}^{\epsilon}\frac{K_{\epsilon}(\eta)}{\eta}|\mathbb{S}^{1}|d\eta & =\int_{0}^{\epsilon}\frac{K_{\epsilon}(\eta)}{\eta}4\pi\eta^{2}d\eta=2\pi\epsilon. \end{align*}
This simple correction is, however, not suitable for more general
cases that contain multiple curve segments and several integrands.
We shall derive a more elegant and seamless way to perform such correction
in the following section.

Now if we consider a $C^{2}$ curve in three dimensions and let $P_{\Gamma}$
be its closest point mapping, we have the following proposition: 
\begin{thm}
Let $\sigma(\mathbf{x})$ be the nonzero singular value of $P'_{\Gamma}$
and let $g$ be a continuous function defined on $\Gamma$. If $\gamma(s)$
is the arclength parameterization of $\Gamma$ and if $\epsilon \max_{x \in \Gamma }  |\kappa(x)|<1$, where $\kappa (x)$ is the curvature of the curve at $x$, we have 
\begin{equation}
\int_{\Gamma}g(\gamma(s))ds=\frac{1}{2\pi}\int_{\mathbb{R}^{3}}g(P_{\Gamma}(\mathbf{x}))\frac{K_{\epsilon}(d)}{d}\sigma(\mathbf{x})d \mathbf{x},
\label{eq:line_integral_formula}
\end{equation}
where $d$ is the distance function to $\Gamma$.
\end{thm}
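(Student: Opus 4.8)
The plan is to reconcile the two formulas for the line integral already obtained: the ``corrected'' formula \eqref{eq:FormulationCurve3D}, which subtracts the contributions of the two polar hemispheres $C_1$ and $C_2$, and the claimed formula \eqref{eq:line_integral_formula}, in which the factor $\sigma(\mathbf{x})$ automatically kills those contributions. So the single thing to prove is that $\sigma(\mathbf{x}) = 0$ whenever $\mathbf{x}$ lies on one of the spherical caps $C_1$, $C_2$ of $\Gamma_\eta$, and that $\sigma(\mathbf{x}) = 1$ on the tubular part of $\Gamma_\eta$, where the parameterization $\mathbf{x}(s,\theta,\eta)$ from the codimension-2 subsection applies.

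First I would compute $P'_\Gamma$ on the tubular part. Using $P_\Gamma(\mathbf{x}) = \mathbf{x} - d(\mathbf{x})\nabla d(\mathbf{x})$, the Jacobian matrix is $P'_\Gamma = I - \nabla d \otimes \nabla d - d\, \mathrm{Hess}(d)$, exactly as in the codimension-1 case but now $\mathrm{Hess}(d)$ has a two-dimensional kernel-complement structure: near a curve the level sets $\Gamma_\eta$ are tubes, so at a point at distance $\eta$ from $\gamma(s)$ the two nonzero "radial/angular" directions behave differently. The cleaner route, mirroring the 2D argument in Proposition~\ref{thm_jacobian_evalues}, is to exhibit eigenvectors of the symmetric matrix $P'_\Gamma$ directly: $\nabla d$ is an eigenvector with eigenvalue $0$; the radial direction $\cos\theta\,\vec{\mathbf{N}}(s) + \sin\theta\,\vec{\mathbf{B}}(s)$ is parallel to $\nabla d$ on the tube, so that is the zero eigenvalue; the angular direction $-\sin\theta\,\vec{\mathbf{N}}(s) + \cos\theta\,\vec{\mathbf{B}}(s)$ gives another eigenvalue; and the tangential direction $\vec{\mathbf{T}}(s)$ gives the last one. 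Since $P_\Gamma(\mathbf{x}(s,\theta,\eta)) = \gamma(s)$ is \emph{independent of $\theta$ and $\eta$}, differentiating along the $\theta$- and $\eta$-directions shows $P'_\Gamma$ annihilates both the radial and angular directions, so it has rank at most one, with the single nonzero singular value coming from the $s$-direction. A direct computation from $\mathbf{x}(s,\theta,\eta)$ — using $|\mathbf{x}_s \times \mathbf{x}_\theta| = \eta(1-\eta\kappa(s)\cos\theta)$ from \eqref{eq:integral g} and $\partial_s P_\Gamma = \gamma'(s) = \vec{\mathbf{T}}(s)$, a unit vector — yields that this nonzero singular value equals $\dfrac{|\partial_s P_\Gamma|}{|\partial_s \mathbf{x}|}\cdot(\text{appropriate normalization})$; carefully carrying out the change of variables shows it is exactly $1$, which is consistent with \eqref{eq:intgoverGamma} (the Jacobian factor there is entirely the $\eta$ from $|\mathbf{x}_s\times\mathbf{x}_\theta|$, with no extra stretching along the curve). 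On each hemisphere $C_i$, $P_\Gamma$ is \emph{constant} (every point of the cap maps to the endpoint $\gamma(0)$ or $\gamma(L)$), so $P'_\Gamma \equiv 0$ and hence $\sigma \equiv 0$ there.

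Assembling: apply the coarea formula to $\int_{\mathbb{R}^3} g(P_\Gamma(\mathbf{x})) \tfrac{K_\epsilon(d)}{d}\sigma(\mathbf{x})\,d\mathbf{x}$, slicing by level sets $\Gamma_\eta$. On each slice the integrand's $\sigma$ vanishes on $C_1 \cup C_2$ and equals $1$ on the tubular part, so the slice integral reduces to $\int_{\Gamma_\eta \setminus (C_1\cup C_2)} g(P_\Gamma(\mathbf{x}))\,dS_{\mathbf{x}}$, which by \eqref{eq:intgoverGamma} is $2\pi\eta \int_\Gamma g\,ds$. Then $\tfrac{1}{2\pi}\int_0^\epsilon \tfrac{K_\epsilon(\eta)}{\eta}\cdot 2\pi\eta\,(\int_\Gamma g\,ds)\,d\eta = (\int_\Gamma g\,ds)\int_0^\epsilon K_\epsilon(\eta)\,d\eta = \int_\Gamma g\,ds$ since $K_\epsilon$ integrates to $1$; the hypothesis $\epsilon \max_{x\in\Gamma}|\kappa(x)| < 1$ is exactly what guarantees the tube of radius $\epsilon$ is embedded, so $d$ is $C^2$, $P_\Gamma$ is well-defined and smooth on the support of $K_\epsilon(d)$, and the coarea slicing is valid. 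The main obstacle is the second step: verifying cleanly that the nonzero singular value is precisely $1$ on the tube (rather than some $s$-dependent stretching factor) — the slick argument is that $P_\Gamma$ restricted to the tube factors through arclength on $\Gamma$ itself, so it is a Riemannian isometry in the tangential direction, but one should check this is consistent with the Frenet-frame computation and does not hide a subtlety at points where $\kappa(s) = 0$ or where the torsion is large.
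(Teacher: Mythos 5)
There is a genuine error at the heart of your argument: the claim that the nonzero singular value of $P'_{\Gamma}$ equals $1$ on the tubular part of $\Gamma_{\eta}$ is false, and the ``isometry in the tangential direction'' heuristic you invoke to justify it does not hold. You correctly identify the rank-one structure ($P_{\Gamma}$ is constant on each normal disk, so the radial and angular directions are annihilated) and correctly get $\sigma\equiv 0$ on the spherical caps, but the tangential stretching of $P_{\Gamma}$ is not unity: the level tube is longer or shorter than $\Gamma$ depending on which side of the curve's curvature you sit. Concretely, take $\Gamma$ a planar circle of radius $R$ and $\mathbf{x}$ at distance $\eta$ on the outside; near $\mathbf{x}$ the closest point map in the plane is $(x,y)\mapsto R(x,y)/\sqrt{x^2+y^2}$, whose nonzero singular value is $R/(R+\eta)\neq 1$. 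The correct value, which the paper derives by working in the Frenet frame and computing $\partial s/\partial u$ ($u$ being arclength along $\vec{\mathbf{T}}$ at the point on the tube), is
\begin{equation*}
\sigma(\mathbf{x})=\frac{1}{1-\kappa(s)\,\eta\cos\theta},
\end{equation*}
i.e.\ exactly the reciprocal of the factor appearing in $|\mathbf{x}_s\times\mathbf{x}_\theta|=\eta\bigl(1-\eta\kappa(s)\cos\theta\bigr)$; the hypothesis $\epsilon\max|\kappa|<1$ is what makes this quantity positive so the eigenvalue is the singular value.

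Because of this, your assembly step proves a different (incidentally also true) identity --- the one with $\sigma$ replaced by the characteristic function of the tubular region, where the slice integral reduces to $2\pi\eta\int_{\Gamma}g\,ds$ only because $\int_0^{2\pi}\cos\theta\,d\theta=0$ as in \eqref{eq:integral g} --- rather than the stated theorem, which concerns the actual singular value of $P'_{\Gamma}$. The repair is exactly the computation you flagged as the ``main obstacle'': with the correct $\sigma$, the weight in the slice integral becomes $\sigma(\mathbf{x})\,|\mathbf{x}_s\times\mathbf{x}_\theta|=\eta$ identically, so $\int_{\Gamma_\eta}g(P_{\Gamma}(\mathbf{x}))\sigma(\mathbf{x})\,dS_{\mathbf{x}}=2\pi\eta\int_{\Gamma}g\,ds$ by exact pointwise cancellation (no averaging in $\theta$ needed), and the coarea/kernel step you wrote then finishes the proof exactly as in the paper.
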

\begin{proof}
Since $K_{\epsilon}$ is compactly supported in $[0,\epsilon]$ it is sufficient to consider points in the tubular neighborhood of the curve $\Gamma$. Thus, for $\mathbf{x}$ in the tubular neighborhood, there exists $0 \leq \eta \leq \epsilon$ such that $\mathbf{x}\in\Gamma_{\eta}$.

\paragraph*{Case 1:}

$\mathbf{x}$ is on the \emph{spherical part }of $\Gamma_{\eta}$
corresponding to the $\eta$-distance to either of the two end points
of the curve $\Gamma$. WLOG we assume that $\mathbf{x}$ is at a
distance $\eta$ from the first end point $C_{1}$ parameterized by
$\gamma(0)$. The result is the same if $\mathbf{x}$ is on the other
sphere, i.e. at a distance $\eta$ from the other end point $C_{2}.$
In that case, $P_{\Gamma}(\mathbf{x)}=\gamma(0)$ for all $\mathbf{x}$
on the spherical part so that the Jacobian matrix $P'_{\Gamma}=0$. Therefore, for $\mathbf{x}$ on the spherical part of $\Gamma_{\eta}$,
all singular values of the Jacobian matrix are zero.

\paragraph*{Case 2: }

$\mathbf{x}$ is on the \emph{tubular part }of $\Gamma_{\eta}$. In
that case, if we use the Frenet frame centered at the point $\mathbf{x=\mathbf{x}}(s,\theta,\eta)\in\Gamma_{\eta}$
, we can write $\mathbf{x}$ in the new coordinate system $(\vec{\mathbf{T}},\vec{\mathbf{N}},\vec{\mathbf{B}})$
as

\begin{equation}
\mathbf{x}=\gamma(s)+v\vec{\mathbf{N}}+w\vec{\mathbf{B},}\label{eq:x in tube}
\end{equation}
where $u=0$ is the coordinate of $\mathbf{x}$ along $\vec{\mathbf{T}}$,
$v$ is the coordinate along $\vec{\mathbf{N}}$ and $w$ is the coordinate
along $\vec{\mathbf{B.}}$ Since the projection $P_{\Gamma}(\mathbf{x})=\gamma(s)$
does not depend on $v$ nor $w$ (since the plane $(\vec{\mathbf{N}},\vec{\mathbf{B}})$
is normal to the curve $\Gamma$) it follows that

\[
\frac{\partial P_{\Gamma}(\mathbf{x})}{\partial v}=\frac{\partial P_{\Gamma}(\mathbf{x})}{\partial w}=0.
\]
On the other hand, we have 
\[
\frac{\partial P_{\Gamma}(\mathbf{x})}{\partial u}=\frac{\partial\gamma(s)}{\partial u}=\frac{\partial s}{\partial u}\frac{\partial\gamma(s)}{\partial s}=\frac{\partial s}{\partial u}\vec{\mathbf{T},}
\]
where $\frac{\partial s}{\partial u}$ is the variation of the arclength
parameter $s$ with respect to $u$ when the point $\mathbf{x}$ is
moving on $\Gamma_{\eta}$ along the tangential direction $\vec{\mathbf{T}}$.
Since $u$ is the arclength parameter along the tangential direction $\vec{\mathbf{T}}$, it follows
that we have a unit speed parameterization along $\vec{\mathbf{T}}$
giving the identity 
\[
\frac{\partial\mathbf{x}}{\partial u}\cdot\vec{\mathbf{T}}=1.
\]
In addition, 
\begin{align*}
\frac{\partial\mathbf{x}}{\partial s} & =\frac{\partial\gamma(s)}{\partial s}+v\frac{\vec{\mathbf{N}}}{\partial s}+w\frac{\vec{\mathbf{B}}}{\partial s}\\
 & =\vec{\mathbf{T}}-\kappa v\vec{\mathbf{T}}+\tau v\vec{\mathbf{B}}-\tau w\vec{\mathbf{N}}\\
 & =\left(1-\kappa v\right)\vec{\mathbf{T}}-\tau w\vec{\mathbf{N}}+\tau v\vec{\mathbf{B}},
\end{align*}
where $\kappa$ is the curvature of $\Gamma$ at $\gamma(s)$ and
$\tau$ is the torsion of the curve $\Gamma$ at the point $\gamma(s)$.
Since the level surface $\Gamma_{\eta}$ is a tube of radius $\eta$,
its intersection with the normal plane $(\vec{\mathbf{N}},\vec{\mathbf{B}})$
is a circle of radius $\eta$. Hence if we use polar coordinates on the normal plane, we obtain $v=\eta\cos\theta$ and $w=\eta\sin\theta$
. It follows that 
\[
\frac{\partial\mathbf{x}}{\partial s}\cdot\vec{\mathbf{T}}=1-\kappa\eta\cos\theta.
\]
Consequently we have 
\[
\frac{\partial\mathbf{x}}{\partial u}\cdot\vec{\mathbf{T}}=1=\frac{\partial s}{\partial u}\frac{\partial\mathbf{x}}{\partial s}\cdot\vec{\mathbf{T}}=\frac{\partial s}{\partial u}(1-\kappa\eta\cos\theta),
\]
and 
\[
\frac{\partial s}{\partial u}=\frac{1}{1-\kappa\eta\cos\theta}.
\]
Therefore, in the Frenet frame, the Jacobian matrix of the closest
point projection map can be written as 
\[
P'_{\Gamma}=\left(\begin{array}{ccc}
\frac{1}{1-\kappa\eta\cos\theta} & 0 & 0\\
0 & 0 & 0\\
0 & 0 & 0
\end{array}\right),
\]
where $\frac{1}{1-\kappa\eta\cos\theta}$ is the nonzero eigenvalue
of the Jacobian of the closest point mapping. Based on the hypothesis on the size of $\epsilon$ related to the geometry of the curve $\Gamma$, the term $\frac{1}{1-\kappa\eta\cos\theta}$ is strictly positive and therefore is also the singular value $\sigma(\mathbf{x})$ of the Jacobian of the closest point mapping.

Therefore we have 
\begin{equation}
\sigma(\mathbf{x)=\begin{cases}
0 & \mbox{if }\mathbf{x}\mbox{ is on the spherical part of }\Gamma_{\eta},\\
\frac{1}{1-\kappa\eta\cos\theta} & \mbox{if }\mathbf{x}\mbox{ is on the tubular part of }\Gamma_{\eta}.
\end{cases}}\label{eq:sigma}
\end{equation}
Now using \eqref{eq:integral g} and \eqref{eq:intgoverGamma} we
obtain 
\[
\begin{aligned}\int_{\Gamma_{\eta}}g(P_{\Gamma}(\mathbf{x))}\sigma(\mathbf{x)}dS_{\mathbf{x}} & =\int_{\Gamma_{\eta} \setminus\left\{ C_{1}\bigcup C_{2}\right\} }g(P_{\Gamma}(\mathbf{x))}\sigma(\mathbf{x)}dS_{\mathbf{x}}\\
 & =\int_{0}^{2\pi}\int_{0}^{L}g(P_{\Gamma}(\mathbf{x))}\sigma(\mathbf{x)}|\mathbf{\mathbf{x_{s}\times\mathbf{x_{\theta}}}}|dsd\theta\\
 & = \int_{0}^{2\pi}\int_{0}^{L}g(\gamma(s))\eta\frac{1-\eta\kappa(s)\cos\theta}{1-\eta\kappa(s)\cos\theta}dsd\theta\\
 & =2\pi\eta\int_{0}^{L}g(\gamma(s))ds 
\end{aligned}
\]
It follows that for $K_{\epsilon}$ a $C^{1}$ averaging kernel compactly
supported in $[0,\epsilon]$, for sufficiently small $\epsilon$ and
by the coarea formula, we have

\[
\begin{aligned}\int_{\Gamma}gds & =\frac{1}{2\pi}\int_{0}^{\epsilon}\frac{1}{\eta}\int_{\Gamma_{\eta}}g(P_{\Gamma}(\mathbf{x))\sigma(\mathbf{x)}}K_{\epsilon}(\eta)d\eta\\
 & =\frac{1}{2\pi}\int_{\mathbb{R}^{3}}g(P_{\Gamma}(\mathbf{x))}\frac{K_{\epsilon}(d)}{d}\sigma(\mathbf{x)}d\mathbf{x.}
\end{aligned}
\]

\end{proof}

\section{Numerical simulations\label{sec:Numerical-simulations}}

In this section we investigate the convergence of our numerical integration using simple 
Riemann sums over uniform Cartesian grids. Unless stated otherwise, the singular values are computed from the matrix the elements of which are computed by
 the standard central difference approximations of the Jacobian matrix $P'_{\Gamma}.$ In other words, the Jacobian matrix $P'_{\Gamma}$ is computed by using finite differences to evaluate the partial derivatives of each component of $P_\Gamma (\mathbf{x})$; more precisely,  if $P_\Gamma(\mathbf{x}) ~= ~(p_{1}(\mathbf{x}), p_{2}(\mathbf{x}), p_{3}(\mathbf{x}))$, and $\mathbf{x}~=~(x_{1},x_{2},x_{3})$  we use finite difference to approximate $\displaystyle \frac{\partial p_{j}}{\partial x_{k}}$ for $1 \leq j,k \leq 3$. We do not evaluate the expressions that involve the partial derivatives of the distance function.

In our computations
we use the cosine kernel
\begin{equation} \label{cosine_kernel}
K_{\epsilon}^{\mbox{cos}}(\eta)=\chi_{[-\epsilon,\epsilon]}(\eta)\frac{1}{2\epsilon}\left(1+\cos\left(\frac{\pi\eta}{\epsilon}\right)\right)
\end{equation}
for integration on surfaces of  codimension 1, and the kernel $K_{\epsilon}^{1,1}$ defined in \eqref{eq:kernel_curve3D_1} 
for codimension 2. 
With these compactly supported kernels, formulas \eqref{eq:int_openinterface-1} and 
\eqref{eq:line_integral_formula} can be considered integration of 
functions defined on suitable hypercubes, periodically extended.
In such settings, simple Riemann sums on Cartesian grids are equivalent to sums using 
Trapezoidal rule, and if all the terms are known analytically, the order of accuracy will be related in general to the smoothness of the
integrands; exception can be found when the normals of the surfaces are rationally dependent on the 
step sizes used in the Cartesian grids. 

\subsection{Integration of codimension one surfaces}
We tested our numerical integration on two different
portions of circle, a torus, a quarter sphere and a three quarter
sphere. We computed their respective lengths
or surface areas by integrating the constant $1$ over the curve or surface.
Each of these tests were designed to exhibit the convergence rate of
our formulations on cases with varying difficulty. In particular,
the convergence rate of our formulation depends on the smoothness
of the closest point mapping inside the tubular neighborhood of the curve or surface.

The results for the portions of circle are given in  Tables~\ref{table_conv_portioncircle_easy}
and \ref{table_conv_portioncircle_difficult}. In the first convergence
studies (Table \ref{table_conv_portioncircle_easy}) the line where
the closest point mapping has a jump discontinuity is parallel to
the grid lines. In this case we see a second order convergence rate
using central differencing to compute the Jacobian
matrix $P'_{\Gamma}$. In the second test
case however, the portion of circle is chosen so that the line where
the closest point mapping has a jump discontinuity is not parallel
to the grid lines. In that case the normal to the curve is rationally dependent on the step size of the Cartesian grid and the convergence rate reduces to first order
even though we used central differencing to compute $P'_{\Gamma}$. We note that in these two tests,
we chose $\epsilon$ (the half width of the tubular neighborhood around
the curve) small enough so that the line where the closest point mapping
is discontinuous is outside of it.

In three dimensions we first tested our method on a torus (closed smooth
surface). The results for the torus are reported in Table \ref{table_conv_torus}.  In this case the closest point mapping is very smooth and we see
third order convergence when using the exact signed distance function and a third order difference scheme
to approximate $P'_{\Gamma}$ (see $\mbox{RE}_{\infty}$ in Table~\ref{table_conv_torus}).
We also tested our method with a computed signed distance function. We constructed the signed distance function using the algorithm described in \cite{cheng_tsai08}, and compared the performance of our method with a fourth order accurate signed distance function and a first order accurate signed distance function (see $\mbox{RE}_{4}$ and  $\mbox{RE}_{1}$ in Table~\ref{table_conv_torus}.) With the fourth order accurate signed distance function we used a third order accurate difference scheme to approximate  $P'_{\Gamma}$, and with the first order accurate signed distance function we used a second order accurate difference scheme to approximate  $P'_{\Gamma}$.  

For surfaces with boundaries we tested the method on a quarter sphere and a
three quarter sphere. The three quarter sphere case is illustrated in Figure~\ref{fig_3quartersphere}. 
The reason for choosing these two cases is because
the closest point mapping has a different degree of smoothness for
each of these surfaces. For the quarter sphere the closest point
mapping is smooth enough, but for the three quarter sphere, the tubular
neighborhood around the surface contains the line where the closest
point mapping has a jump discontinuity. In that latter case, it is
therefore necessary to use an adequate one sided discretization to
compute $P'_{\Gamma}$ accurately. 
The one-sided discretization that we used is reported in Section~\ref{sec:one-sided-discretization}.
The test for the quarter sphere still uses central differencing to compute
$P'_{\Gamma}$. The results for the portions
of sphere are reported in Tables~\ref{table_conv_quartersphere} and
\ref{table_conv_3quartersphere}.



\subsection{Integrating along curves in three dimensions}
In codimension 2, we tested our numerical integration on a coil wrapped
around the helix defined parametrically as  
\[
\mathbf{x}(t)=\left(
r\cos(t),
r\sin (t),
bt
\right),
\]
with $r=0.75$ and $b=0.25$. The coil is then wrapped around the
helix at a distance of $0.2$ from the helix. See Figure~\ref{fig:coil}.
As our test case, we computed the length of the coil by integrating
$1$ along the curve. The results are reported in Table~\ref{table_conv_helix}.

\subsection{One-sided discretization of the Jacobian matrix}\label{sec:one-sided-discretization}
Here  for completeness, we describe the one-sided discretization used in computing results reported in Table~\ref{table_conv_3quartersphere}. For simplicity we provide the explanation in $\mathbb R^2$. The discretization generalizes easily to 3D.

We will describe the one-sided discretization for a
uniform Cartesian grid in $\mathbb R^2$, namely for $P_{\Gamma}(\mathbf{x}_{i,j})=(U_{i,j},V_{i,j})$
with $\mathbf{x}_{i,j}=(ih,jh),$ $i,j\in\mathbb{Z}$ and $h>0$ being
the step size. The Jacobian matrix will be approximated by simple
finite differences defined below: 
\[
P'_{\Gamma}(\mathbf{x}_{i,j})\approx\left(\begin{array}{cc}
(U_{x})_{i,j} & (U_{y})_{i,j}\\
(V_{x})_{i,j} & (V_{y})_{i,j}
\end{array}\right).
\]
The discretization of $U$ and $V$ have to be defined together because
the two functions are not independent of each other. With

\[
(U_{x}^{\pm})_{i,j}:=\pm\frac{1}{2h}\left(-3U_{i,j}+4U_{i\pm1,j}-U_{i\pm2,j}\right),
\]
 and the smoothness indicator
\[
S^\pm_{i,j}=S^{\pm}(U_{i,j}):=\triangle^{+}\triangle^{-}U_{i\pm1,j}
\]
 we define 
\[
(U_{x})_{i,j}:=\begin{cases}
(U_{x}^{+})_{i,j}, & \mbox{if }|S_{i,j}^{+}|\le|S_{i,j}^{-}|,\\
(U_{x}^{-})_{i,j}, & \mbox{otherwise, }
\end{cases}
\]
and $(V_{x})_{i,j}$ is defined according to the choice of stencil
based on $S^{\pm}(U_{i,j})$ 
\[
(V_{x})_{i,j}:=\begin{cases}
(V_{x}^{+})_{i,j}, & \mbox{if }|S_{i,j}^{+}|\le|S_{i,j}^{-}|,\\
(V_{x}^{-})_{i,j}, & \mbox{otherwise.}
\end{cases}
\]
The discretization of $U_{y}$ and $V_{y}$ is defined similarly with the choice of the stencil
based on $S^{\pm}(V_{i,j})$. 

\section{Summary\label{sec:Conclusion}}

In this paper, we presented a new approach for computing integrals
along curves and surfaces that are defined either implicitly by the
distance function to these manifolds or by the closest point mappings.
We are motivated by the abundance of discrete point sets sampled from
surfaces using devices such as LIDAR, the need to compute functionals
defined over the underlying surfaces, as well as many applications
involving the level set method or the use of closest point methods.

Contrary to most other existing approximations using either smeared
out Dirac delta functions or locally obtained parameterized patches,
we derive a volume integral in the embedding Euclidean space which
is equivalent to the desired surface or line integrals. This allows
for easy construction of higher order numerical approximations of
these integrals. The key components of this new approach include the
use of singular values of the Jacobian matrix of the closest point
mapping, which can be computed easily to high order even by simple
finite differences.


\begin{backmatter}

\section*{Competing interests}
  The authors declare that they have no competing interests.


\section*{Acknowledgements}

The second author thanks Prof. Steve Ruuth for stimulating conversations. Kublik's research was partially funded by a University of Dayton Research Council Seed Grant and Tsai's research is partially supported by Simons Foundation, NSF Grants
DMS-1318975, DMS-1217203, and ARO Grant No. W911NF-12-1-0519.



\bibliographystyle{bmc-mathphys} 
\bibliography{references}      




\section*{Figures}
  \begin{figure}[h!]
  \caption{\csentence{Level set of a 2D open curve.}
     An example of an open curve $\Gamma$ (black curve) and its $\eta$-level
set $\Gamma_{\eta}$ (red curve). $\Gamma_{\eta}$ consists of a tubular
part and two semi circles at the two ends.} \label{fig_levelset_curve2D}
      \end{figure}

\begin{figure}[h!]
  \caption{\csentence{Level set of a 3D surface with boundaries.}
      An example of a surface with boundaries viewed from different angles
and its corresponding $\eta$-level set $\Gamma_{\eta}$ viewed from
the same angles. The figure at the bottom right corner
shows the surface and $\Gamma_{\eta}$.} \label{fig_levelset_surface3D} 
      \end{figure}

  \begin{figure}[h!]
  \caption{\csentence{Level set of an open curve in 3D.}
     Three dimensional curve with its $\eta$-level surface $\Gamma_{\eta}$ in green and the Frenet
frame at a point on $\Gamma_{\eta}$. }  \label{fig_frame_diagram}
      \end{figure}

 \begin{figure}[h!]
  \caption{\csentence{Three quarter sphere.}
    The three quarter sphere and its corresponding $\eta$-level set $\Gamma_{\eta}$.}   \label{fig_3quartersphere} 
      \end{figure}
      
       \begin{figure}[h!]
  \caption{\csentence{Coil and one of its level sets.}
    The coil and one of the level sets of the distance function to the coil used in the reported numerical simulations.} \label{fig:coil} 
      \end{figure}


\section*{Tables}


\begin{table}[h!]
\caption{Errors for a portion of circle. Relative errors in the numerical approximation of the length of a planar curve, which is
a portion of circle of radius $R=0.75$ centered at $0$. 
The width for the tubular neighborhood of the curve is $\epsilon=0.2.$ 
In this computation,  the closest point mapping has a jump discontinuity along a straight-line 
which is arranged to be parallel to the grid lines. 
\label{table_conv_portioncircle_easy} }
 %
\begin{tabular}{|c|c|c|}
\hline 
$n$  & Relative Error  & Order\tabularnewline
\hline 
\hline 
$64$  & $2.7994\times10^{-4}$  & --\tabularnewline
\hline 
$128$  & $7.0665\times10^{-5}$  & $1.99$\tabularnewline
\hline 
$256$  & $1.7187\times10^{-5}$  & $2.04$\tabularnewline
\hline 
$512$  & $4.2719\times10^{-6}$  & $2.01$\tabularnewline
\hline 
$1024$  & $1.0636\times10^{-6}$  & $2.01$\tabularnewline
\hline 
$2048$  & $2.6567\times10^{-7}$  & $2.00$\tabularnewline
\hline 
$4096$  & $6.6045\times10^{-8}$  & $2.01$\tabularnewline
\hline 
$8192$  & $1.6513\times10^{-8}$  & $2.00$\tabularnewline
\hline 
\end{tabular}
\end{table}

\begin{table}[h!]
\caption{Errors for a tilted portion of circle. Relative errors in the numerical approximation of the length of a planar curve, 
which is a portion of circle of radius $R=0.75$ centered at $0$. 
The width for the tubular neighborhood of the curve is $\epsilon=0.2.$ 
In this computation,  the  jump discontinuity of the closest point mapping is not parallel to the grid lines. \label{table_conv_portioncircle_difficult} }
 %
\begin{tabular}{|c|c|c|}
\hline 
$n$  & Relative Error  & Order\tabularnewline
\hline 
\hline 
$64$  & $3.7159\times10^{-5}$  & --\tabularnewline
\hline 
$128$  & $2.5786\times10^{-7}$  & $7.17$\tabularnewline
\hline 
$256$  & $4.2361\times10^{-6}$  & $-4.04$\tabularnewline
\hline 
$512$  & $3.2246\times10^{-6}$  & $0.39$\tabularnewline
\hline 
$1024$  & $1.8876\times10^{-6}$  & $0.77$\tabularnewline
\hline 
$2048$  & $1.0132\times10^{-7}$  & $0.90$\tabularnewline
\hline 
$4096$  & $5.2372\times10^{-7}$  & $0.95$\tabularnewline
\hline 
$8192$  & $2.6615\times10^{-7}$  & $0.98$\tabularnewline
\hline 
\end{tabular}
\end{table}

\begin{table}[h!]
\caption{Errors for a torus. Relative errors in the numerical approximation of the surface area of a torus centered at $0$. The distance from the center to the tube that form the torus is $R=0.75$ and the radius of the tube
is $r=0.25$. 
In this computation, we summed up grid points that are within  $\epsilon=0.2$ distance from the surface for $ \mbox{RE}_{\infty}$ and $\mbox{RE}_{4}$, and $\epsilon = 0.03$ for $\mbox{RE}_{1}$. $ \mbox{RE}_{\infty}$, $\mbox{RE}_{4}$ and $\mbox{RE}_{1}$ are the relative error using the exact signed distance function, the relative error using a fourth order accurate signed distance function and the relative error using a first order accurate signed distance function respectively.
The Jacobian matrix $P'_{\Gamma}$  is approximated by  a 
standard third order accurate differencing except for  $\mbox{RE}_{1}$  where we used a second order accurate differencing to approximate $P'_{\Gamma}$. \label{table_conv_torus} }
 %
\begin{tabular}{|c|c|c|c|c|c|c|}
\hline 
$n$  & $ \mbox{RE}_{\infty}$  & Order & $\mbox{RE}_{4}$ & Order & $\mbox{RE}_{1}$ & Order \tabularnewline
\hline 
\hline 
$32$  & $6.2030\times10^{-3}$  & $-$ & $1.1699\times10^{-2}$  & $-$ & $5.8000\times10^{-2}$  & $-$ \tabularnewline
\hline 
$64$  & $1.8073\times10^{-4}$  & $5.10$& $1.0169\times10^{-3}$  & $3.52$& $1.4456\times10^{-2}$  & $2.00$  \tabularnewline
\hline 
$128$  & $6.6838\times10^{-6}$  & $4.76$& $1.3568\times10^{-5}$  & $6.23$ & $3.9830\times10^{-3}$  & $1.86$  \tabularnewline
\hline 
$256$  & $4.1530\times10^{-7}$  & $4.01$& $7.1567\times10^{-7}$  & $4.24$ & $1.4391\times10^{-3}$  & $1.47$ \tabularnewline
\hline 
$512$  & $5.0379\times10^{-8}$  & $3.04$ & $6.1982\times10^{-8}$& $3.53$ & $5.1463 \times 10^{-4}$ & 1.48   \tabularnewline
\hline 
\end{tabular}
\end{table}

\begin{table}[h!]
\caption{Errors for a quarter sphere. 
Relative errors in the numerical approximation of the surface area of a 
quarter sphere with radius $R=0.75$ centered at $0.$
In this computation, we summed up grid points that are within  $\epsilon=0.2$ distance from the surface.
We used the standard central
difference scheme to compute each entry of the Jacobian matrix $P'_{\Gamma}$.
\label{table_conv_quartersphere} }
 %
\begin{tabular}{|c|c|c|}
\hline 
$n$  & Relative Error  & Order\tabularnewline
\hline 
\hline 
$32$  & $9.2825\times10^{-3}$  & $-$\tabularnewline
\hline 
$64$  & $1.8365\times10^{-3}$  & $2.34$\tabularnewline
\hline 
$128$  & $2.7726\times10^{-4}$  & $2.73$\tabularnewline
\hline 
$256$  & $7.1886\times10^{-5}$  & $1.95$\tabularnewline
\hline 
$512$  & $1.4811\times10^{-5}$  & $2.30$\tabularnewline
\hline 
\end{tabular}
\end{table}

\begin{table}[h!]
\caption{Errors for a three quarter sphere. Relative errors in the numerical approximation of the surface area of a 
three quarter sphere with radius $R=0.75$ centered at $0$ (this is the portion of a sphere that misses half of a hemisphere).
In this computation, we summed up grid points that are within  $\epsilon=0.2$ distance from the surface.
Due to this setup, the closest point mapping has a discontinuity that stems out from the boundary of the surface.
We used the discretization described in Section~\ref{sec:one-sided-discretization} to compute each entry of the Jacobian
matrix $P'_{\Gamma}$. \label{table_conv_3quartersphere} }
 %
\begin{tabular}{|c|c|c|}
\hline 
$n$  & Relative Error  & Order\tabularnewline
\hline 
\hline 
$32$  & $1.1726\times10^{-2}$  & $-$\tabularnewline
\hline 
$64$  & $1.1733\times10^{-3}$  & $3.32$\tabularnewline
\hline 
$128$  & $9.1325\times10^{-4}$  & $0.36$\tabularnewline
\hline 
$256$  & $3.8238\times10^{-4}$  & $1.26$\tabularnewline
\hline 
$512$  & $7.8308\times10^{-5}$  & $2.29$\tabularnewline
\hline 
\end{tabular}
\end{table}

\begin{table}[h!]
\caption{Errors for a coil. Relative errors in the numerical approximation of 
a coil wrapped around a helix. In this computation, we used a constant width for
the tubular neighborhood $\epsilon=0.1$ and took the averaging kernels
to be $K_{\epsilon}^{1,1}$ defined in~\eqref{eq:kernel_curve3D_1}. \label{table_conv_helix} }
 %
\begin{tabular}{|c|c|c|}
\hline 
$n$  & Relative Error  & Order \tabularnewline
\hline 
\hline 
$60$  & $5.5078\times10^{-3}$  & $-$ \tabularnewline
\hline 
$120$  & $1.1476\times10^{-3}$  & $2.63$ \tabularnewline
\hline 
$240$  & $2.3409\times10^{-4}$  & $2.29$ \tabularnewline
\hline 
$480$  & $3.7166\times10^{-5}$  & $2.66$ \tabularnewline
\hline 
\end{tabular}
\end{table}

\newpage


%
%

\end{backmatter}
\end{document}